\newtheorem{lemma}{Lemma}
\newtheorem{theorem}{Theorem}
\newtheorem{prop}{Proposition}
\newtheorem{cor}{Corollary}
\newcommand{\BC}{\textsc{bc}}
\DeclareMathOperator{\tr}{tr}
\DeclareMathOperator{\var}{var}
\title{Constraints on Brouwer's Laplacian Spectrum Conjecture}
\author{Joshua N. Cooper}
\date{\today}
\begin{document}

\maketitle

\begin{abstract}
    Brouwer's Conjecture states that, for any graph $G$, the sum of the $k$ largest (combinatorial) Laplacian eigenvalues of $G$ is at most $|E(G)| + \binom{k+1}{2}$, $1 \leq k \leq n$.  We present several interrelated results establishing Brouwer's conjecture $\textsc{bc}_k(G)$ for a wide range of graphs $G$ and parameters $k$.  In particular, we show that (1) $\textsc{bc}_k(G)$ is true for low-arboricity graphs, and in particular for planar $G$ when $k \geq 11$; (2) $\textsc{bc}_k(G)$ is true whenever the variance of the degree sequence is not very high, generalizing previous results for $G$ regular or random; (3) $\textsc{bc}_k(G)$ is true if $G$ belongs to a hereditarily spectrally-bounded class and $k$ is sufficiently large as a function of $k$, in particular $k \geq \sqrt{32n}$ for bipartite graphs; (4) $\textsc{bc}_k(G)$ holds unless $G$ has edge-edit distance $< k \sqrt{2n} = O(n^{3/2})$ from a split graph; (5) no $G$ violates the conjectured upper bound by more than $O(n^{5/4})$, and bipartite $G$ by no more than $O(n)$; and (6) $\textsc{bc}_k(G)$ holds for all $k$ outside an interval of length $O(n^{3/4})$.  Furthermore, we present a surprising negative result: asymptotically almost surely, a uniform random signed complete graph violates the conjectured bound by $\Omega(n)$. 
\end{abstract}

\section{Introduction}

In \cite{BrHa12}, the authors state what has come to known as ``Brouwer's Conjecture.''  For a simple undirected graph $G$ on $n$ vertices, let $A(G)$ denote its adjacency matrix, $D(G)$ the diagonal matrix of its degree sequence $d_1 \geq \cdots \geq d_n$, and $L(G) = D(G) - A(G)$ its (combinatorial) Laplacian matrix.  Write $\{\lambda_i\}_{i=1}^n$ for the multiset of eigenvalues of $L(G)$; we may assume $\lambda_1 \geq \cdots \geq \lambda_n$ because $L(G)$ is real symmetric.  It is well-known that $\lambda_1 \leq n$ and $\lambda_n = 0$.  Brouwer's Conjecture states that, for each $k$, $1 \leq k \leq n$,
\begin{equation} \label{eq:bcstatement}
\sum_{i=1}^k \lambda_i \leq |E(G)| + \binom{k+1}{2},
\end{equation}
a claim we henceforth refer to as $\BC_k(G)$.  

At first, the inequality (\ref{eq:bcstatement}) may seem somewhat mysterious since, for example, it is highly inhomogeneous and, while the left-hand side is a concave function of $k$, the right-hand side is convex.  Therefore, we discuss here some of the motivation for Brouwer's Conjecture and some of the many partial results known about it.

Write $s_k$ for the quantity $\sum_{i=1}^k \lambda_i$.  It is a key observation that $s_k$ is the same as the $k$-th Ky Fan norm $\|L(G)\|_{(k)}$, defined to be the sum of its $k$ largest singular values (see \cite{Bh97}).  We may assume that $G$ is connected by applying the convexity of $\binom{k+1}{2}$ as a function of $k$ and the fact that the spectrum of $L(G_1 \cup G_2)$ is the multiset union of the spectra of $L(G_1)$ and $L(G_2)$ when the $G_i$ are disjoint.  Then $\BC_1(G)$ follows from the fact that $s_1 = \lambda_1 \leq n$, since $|E(G)| + \binom{1+1}{2} \geq n$ for any connected $G$.  Similarly, $\BC_n(G)$ and $\BC_{n-1}(G)$ hold because 
$$
s_{n-1} = s_n = \tr(L(G)) = \sum_{i=1}^n d_i = 2 |E(G)| \leq |E(G)| + \binom{n}{2},
$$
where we are using that $\lambda_n = 0$. Note that -- writing $\overline{G} = (V(G), \binom{V(G)}{2} \setminus E(G))$ for the complement, $J_n$ for the all-ones matrix, and $I_n$ for the identity matrix -- we have 
\begin{align*}
    L(\overline{G}) &= D(\overline{G}) - A(\overline{G}) \\
    & = ((n-1) I_n - D(G)) - (J_n - I_n - A(G)) = n I_n - J_n - L(G).
\end{align*}
Since the eigenspace of $L(G)$ corresponding to $\lambda_n$ is the span of the all-ones vector, which is the sole nonzero eigenspace of $J_n$, this implies that for $1 \leq k \leq n-2$,
\begin{align} 
s_{n-k-1}(\overline{G}) &= n(n-k-1) - 2 |E(G)| + s_{k}(G) \label{eq:complementation} \\
&= |E(\overline{G})| + \binom{n-k}{2} - |E(G)| - \binom{k+1}{2} + s_{k}(G) \nonumber
\end{align}
from which it follows that, if $\BC_k(G)$ holds, then $\BC_{n-k-1}(\overline{G})$ does as well.  

Let $G' = G + K_1$ denote the join of $G$ and a single new vertex $x$, i.e., $G' = (V(G) \cup \{x\},E(G) \cup \{xy : y \in V(G)\})$.  Since $G' = \overline{\overline{G} \cup \{x\}}$, it is easy to see that the spectrum of $G'$ is
$$
n+1, \lambda_{1}+1, \lambda_{2}+1, \ldots , \lambda_{n-1}+1, 0
$$
so that $s_k(G') = n + k + s_{k-1}(G) = |E(G')| - |E(G)| + \binom{k+1}{2} - \binom{k}{2} + s_{k-1}(G)$, from which it follows that, if $\BC_{k-1}(G)$ holds, then $\BC_{k}(G')$ holds.  In particular, the family of all {\em threshold graphs}, defined by a number of equivalent characterizations, including: can be obtained by complementation and disjoint union with a $K_1$, starting from the empty graph; has the form $G = (V,\{xy : f(x)+f(y) \geq B\})$ where $f : V \rightarrow \mathbb{R}$ is any function and $B \in \mathbb{R}$; has no induced $C_4$, $\overline{C_4}$, or $P_4$ subgraph; is both a cograph and a split graph.

That (\ref{eq:bcstatement}) holds for threshold graphs and is tight for this class is perhaps the strongest motivation for Brouwer's Conjecture, since threshold graphs are believed to maximize the quantities $s_k(G)$.  (Another major motivation for the conjecture comes from the closely related Bai's Theorem, previously known as the Grone-Merris Conjecture; see \cite{Ba11} and \cite{BrHa12}.)  Note that the above observations also imply that graphs generated by disjoint unions and complements from graphs known to satisfy the conjecture do as well -- including, in particular, all cographs. 

Other results on Brouwer's Conjecture include that $\BC_k(G)$ holds for any graph with at most $10$ vertices\footnote{In fact, by using the results of the present manuscript to narrow down the computation from 1,006,700,565 connected graphs on 11 vertices, the author has confirmed this result for $n \leq 11$ via \texttt{SageMath} and \texttt{nauty}.}, via an exhaustive computation by Brouwer; $\BC_2(G)$ -- and therefore $\BC_{n-3}(G)$ by (\ref{eq:complementation}) -- always holds (see \cite{HaMoTa10}); trees/forests by \cite{HaMoTa10}; regular graphs and split graphs (whose vertex set can be partitioned into a clique and an independent set) by \cite{Ma10}; unicyclic and bicyclic graphs by \cite{DuZh12}; and random graphs asymptotically almost surely, by \cite{Ro19}.

Below, we further restrict the range of possible counterexamples to the conjecture in the following ways.  Here and throughout, we use $n$ to denote the number of vertices and $m$ to denote the number of edges when it is clear what graph is referred to.
\begin{enumerate}
    \item In Section \ref{sec:basics}, we show that $\BC_k(G)$ is true for $k \geq 4 \Upsilon - 1$ in graphs with arboricity $\Upsilon$ and it is true for $k \geq 2 \Delta + 3$ in graphs with maximum degree $\Delta$.  In particular, $\BC_k(G)$ holds for $k \geq 11$ when $G$ is planar.
    \item Define the {\em maximum subgraph spectral density} $t = t(G) \in [0,1)$ to be $\max_{\emptyset \subsetneq S \subset V(G)} \rho(G[S])/|S|$ where $\rho(G)$ is the (adjacency) spectral radius of $G$ and $G[S]$ denotes the subgraph induced by $S$.  In Section \ref{sec:basics}, we show that $\BC_k(G)$ holds if $k \geq 2m^{1/3}/(1-t)^{2/3}$, $m \geq (2n)^{3/2}/(1-t)$, or $k \geq \sqrt{8n}/(1-t)$.  Since $t \leq 1/2$ for bipartite graphs, in that case this implies $\BC_k(G)$ for $k \geq \sqrt{32n}$ or $m \geq \sqrt{32} n^{3/2}$.
    \item In Section \ref{sec:variance}, we show that the conjecture is true if the variance of its degree sequence is at most $(\beta (1-\beta) n)^2 - \beta/n^2$, where $\beta = 2m/n^2$ is the edge density, implying Brouwer's Conjecture when $\Delta(G) - \delta(G)$ (the difference of maximum and minimum degrees) is at most $2 \overline{d}(n-\overline{d})/n - 1$, where $\overline{d}$ is the average degree.  In particular, this vastly generalizes the aforementioned results for regular and random graphs, since the degree-sequence variance of the former is $0$ and the latter (assuming the Erd\H{o}s-R\'{e}nyi model $G(n,p)$ with $\max\{p,1-p\} = \omega(n^{-1})$) has variance $\approx n \beta (1-\beta)$ with high probability.
    \item The minimum edge-edit distance of a graph $G$ from a split graph is known as its {\em splittance} $\sigma(G)$.  In Section \ref{sec:splitness}, we show that $\BC_k(G)$ holds if $k \leq \sigma(G)/\sqrt{2n}$, so the full conjecture holds if $\sigma(G) \geq n^{3/2}/\sqrt{2}$.  Then we conclude that, in general, $s_k$ exceeds $m + \binom{k+1}{2}$ by at most $(2n)^{3/4} \sqrt{k} = O(n^{5/4})$, and by at most $O(n)$ for bipartite graphs.  Since Mayank (\cite{Ma10}) showed that this excess is at most $\sigma(G)$, and Hammer-Simeone (\cite{HaSi81}) showed that $\sigma(G) \leq (n+1)(n-2)/8$ and that this is tight, our result is a substantial improvement.
    \item In Section \ref{sec:window}, we show that $\BC_k(G)$ holds for all $k$ outside an interval of length $2^{1/4} n^{3/4}$.
\end{enumerate}
Finally, we present a surprising negative result. It has also been conjectured -- and some limited results are known, see \cite{AsOmTa13,ChHaJiLi18} -- that the bound (\ref{eq:bcstatement}) holds for the signless Laplacian $Q(G) = D(G) + A(G)$ as well. Let $G$ be a signed graph (a graph whose edges have been assigned labels $\{\pm 1\}$) and in that case define $A(G)$ to have entries equal to these labels when they are nonzero, and $L(G) = D(G) - A(G)$ (where $D(G)$ is the ordinary diagonal degree matrix).  These matrices interpolate between Laplacian matrices and signless Laplacian matrices, because the former is obtained from graphs all of whose signs are $+1$ and the latter from graphs all of whose signs are $-1$.  Nikiforov (\cite{Ni19}) suggested investigating the conjecture for signed graphs, and indeed discovered a small ($n=5$ and $m=6$) example of a signed graph $G$ for which $L(G)$ violates the bound.  In Section \ref{sec:signed} we show that in fact, asymptotically almost surely, a uniformly random signed complete graph violates the bound by $\Omega(n)$. 

\section{Preliminaries} \label{sec:basics}

Let $v_1,\ldots,v_n$ denote the vertices of $G$, with $d_i = \deg(v_i)$ and $d_1 \geq \ldots \geq d_n$.  Denote $S_k = \{v_1,\ldots,v_k\}$, write $\overline{T}$ for $V(G) \setminus T$ when $T \subset V(G)$, and let $e(T) = |E(G[T])|$.  Then, since $\|D\|_{(k)} = \sum_{i=1}^k d_i = |E(G)| + |E(G[S_k])| - |E(G[\overline{S_k}])|$,
\begin{equation} \label{eq:m1m2energy}
\| L \|_{(k)} \leq \| D \|_{(k)} + \| A \|_{(k)} \leq m + e(S_k) - e(\overline{S_k}) + \|A\|_{(k)}.
\end{equation}
We will repeatedly use this fact, as well as the facts that $\|A\|_{(k)} \leq \sqrt{2km}$ and $\|A\|_{(k)} \leq n (\sqrt{k}+1)/2$, results that appear in \cite{Ni16}.  Let 
$$
t = t(G) :=  \max_{\substack{S \subset V(G) \\ S \neq \emptyset}} \frac{\rho(G[S])}{|S|} < 1
$$
be the maximum subgraph spectral density of $G$.  Below, we repeatedly make use of the well-known fact that $\rho(G) \geq \overline{d}(G)$, the average degree.

Assuming $m \leq k^3(1-t)^2/8$ yields
\begin{align*}
    \| L \|_{(k)} &\leq m + \frac{tk^2}{2} + \sqrt{2km} \\
    &< m + \binom{k+1}{2},
\end{align*}
since $|E(G[S_k])| \leq |S_k| \rho(G[S_k])/2 \leq k(tk)/2 = tk^2/2$.  So, $\BC_k(G)$ holds if $m \leq k^3(1-t)^2/8$, which is equivalent to $k \geq 2m^{1/3}/(1-t)^{2/3}$.  Note that $\BC_k(G)$ also holds if
$$
kn \leq m + \binom{k+1}{2}
$$
because $\|L\|_{(k)} \leq k \mu_1 \leq kn$.  This follows immediately if $k \leq m/n$.  We may assume $m > k^3(1-t)^2/8$, so $k > m/n$ implies $2m^{1/3}/(1-t)^{2/3} > m/n$, i.e., $m < (2n)^{3/2}/(1-t)$.  Thus,

\begin{prop} \label{prop:use-t}
$\BC_k(G)$ holds for any graph with $m$ edges and $n$ vertices if either $k \geq 2m^{1/3}/(1-t)^{2/3}$ or $m \geq (2n)^{3/2}/(1-t)$, where
$$
t = t(G) :=  \max_{\substack{S \subset V(G) \\ S \neq \emptyset}} \frac{\rho(G[S])}{|S|} < 1
$$
is the maximum subgraph spectral density of $G$.
\end{prop}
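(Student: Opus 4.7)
The plan is to start from the Ky Fan-norm triangle inequality $\|L\|_{(k)} \le \|D\|_{(k)} + \|A\|_{(k)}$ for $L = D - A$, and bound each summand separately using structural information about $G$. For the diagonal part, ordering $d_1 \ge \cdots \ge d_n$ and setting $S_k = \{v_1,\ldots,v_k\}$, a standard double-counting identity gives $\|D\|_{(k)} = m + e(S_k) - e(\overline{S_k})$. I would discard the nonnegative term $e(\overline{S_k})$ and invoke the classical inequality $\overline{d}(H) \le \rho(H)$ together with the definition of $t$ to write $e(S_k) \le \tfrac{1}{2}|S_k|\,\rho(G[S_k]) \le \tfrac{1}{2} t k^2$. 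For the off-diagonal part, I would invoke the Nikiforov bound $\|A\|_{(k)} \le \sqrt{2km}$, obtaining
\[
\|L\|_{(k)} \;\le\; m + \tfrac{1}{2} t k^2 + \sqrt{2km}.
\]

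To obtain the first sufficient condition, it suffices to ensure the right-hand side is at most $m + \binom{k+1}{2} = m + k^2/2 + k/2$; a clean sufficient requirement (dropping the $k/2$ term as slack) is $\sqrt{2km} \le (1-t)k^2/2$, which squares and rearranges to $m \le (1-t)^2 k^3/8$, i.e.\ $k \ge 2 m^{1/3}/(1-t)^{2/3}$. That handles the first case.

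For the second, I would supplement the analysis with the trivial bound $\|L\|_{(k)} \le k \lambda_1 \le kn$, which yields $\BC_k(G)$ whenever $kn \le m + \binom{k+1}{2}$, and in particular whenever $k \le m/n$. If $\BC_k(G)$ were to fail, both sufficient conditions must fail, forcing $m/n < k < 2 m^{1/3}/(1-t)^{2/3}$; eliminating $k$ between these inequalities yields $m^{2/3} < 2n/(1-t)^{2/3}$, i.e.\ $m < (2n)^{3/2}/(1-t)$, and the contrapositive is the claim. None of the individual steps is genuinely hard; the main strategic point is to spend the slack $(1-t)$ on the degree-count bound rather than on the adjacency bound, which is correct because $e(S_k)$ is the term responsible for the $\binom{k+1}{2}$ behaviour on the right-hand side of Brouwer's inequality and accounts for the extremal (threshold-graph) configurations.
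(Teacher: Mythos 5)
Your proposal is correct and follows essentially the same route as the paper: the same decomposition $\|L\|_{(k)} \le \|D\|_{(k)} + \|A\|_{(k)}$ with $e(S_k) \le tk^2/2$, the Nikiforov bound $\|A\|_{(k)} \le \sqrt{2km}$, the trivial bound $\|L\|_{(k)} \le kn$, and the same elimination of $k$ between $m/n < k < 2m^{1/3}/(1-t)^{2/3}$ to get the second condition. No substantive differences to note.
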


Thus, any graph $G$ for which $\BC_k(G)$ fails to hold has $k < 2m^{1/3}/(1-t)^{2/3}$, $m < (2n)^{3/2}/(1-t)$, and $k > m/n$, so
$$
\frac{m}{n} < k < \frac{2((2n)^{3/2}/(1-t))^{1/3}}{(1-t)^{2/3}} = \frac{\sqrt{8n}}{1-t}
$$
Recall that the {\em arboricity} $\Upsilon(G)$ is defined the be the smallest $r$ so that $G$ is a union of $r$ forests.

\begin{prop} \label{prop:arboricity}
For a graph $G$ with arboricity $\Upsilon$, $\BC_k(G)$ holds for $k\geq 4\Upsilon - 1$.
\end{prop}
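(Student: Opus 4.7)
The plan is to combine the master inequality $\|L\|_{(k)} \leq m + e(S_k) - e(\overline{S_k}) + \|A\|_{(k)}$ from the preliminaries with the Nash--Williams characterization of arboricity: every subgraph on $s$ vertices contains at most $\Upsilon(s-1)$ edges, so that $e(S_k) \leq \Upsilon(k-1)$ and globally $m \leq \Upsilon(n-1)$. Using $\|A\|_{(k)} \leq \sqrt{2km}$ and temporarily discarding the nonpositive term $-e(\overline{S_k})$, the conclusion $\BC_k(G)$ reduces to showing
$$\Upsilon(k-1) + \sqrt{2km} \leq \binom{k+1}{2}.$$
After verifying $\binom{k+1}{2} - \Upsilon(k-1) \geq 0$ (automatic for $k \geq 4\Upsilon-1$), squaring turns this into a polynomial condition on $m$. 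At the critical value $k = 4\Upsilon-1$, the quantity $\binom{k+1}{2} - \Upsilon(k-1)$ simplifies to exactly $4\Upsilon^2$, so the inequality becomes $m \leq 8\Upsilon^4/(4\Upsilon - 1)$, which directly handles all graphs with $m$ up to roughly $2\Upsilon^3$.

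To handle the remaining regime where $m$ exceeds this threshold---which necessarily requires $n$ much larger than $\Upsilon^2$---the plan is to reincorporate the discarded $-e(\overline{S_k})$ term. Writing $a = e(S_k)$, $b = e(S_k, \overline{S_k})$, and $c = e(\overline{S_k})$, Nash--Williams applied to both $S_k$ and $\overline{S_k}$ gives $a \leq \Upsilon(k-1)$ and $c \leq \Upsilon(n-k-1)$, while $m = a + b + c$ being large forces $c$ (or $b$) to be large. Combine $\|A\|_{(k)} \leq \sqrt{2km}$ with the AM--GM inequality $\sqrt{2kc} \leq c + k/2$ to absorb a portion of $\|A\|_{(k)}$ into the additional $+c$ on the right-hand side of the master inequality. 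The threshold $k \geq 4\Upsilon-1$ is precisely the point where these bounds combine cleanly, and monotonicity of both sides in $k$ (at fixed $m$, $\Upsilon$) extends the conclusion from $k = 4\Upsilon - 1$ to all larger $k$.

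The main obstacle is the bookkeeping between the two regimes and verifying that the constants produce the advertised threshold $4\Upsilon - 1$ rather than a larger coefficient such as $5\Upsilon$ or $4\Upsilon + O(\sqrt{\Upsilon})$. Alternative routes---applying the Grone--Merris/Bai bound $s_k \leq \sum_v \min(d_v, k)$, or decomposing $G$ into $\Upsilon$ edge-disjoint forests and invoking Haemers--Mohammadian--Tayfeh-Rezaie for trees---each close one regime cleanly but appear to lose the threshold constant in the other, so the hybrid approach above is likely needed. Note that the $\Upsilon = 1$ case (i.e., $k \geq 3$ for forests) is already known, so the argument really only needs to be tight for $\Upsilon \geq 2$.
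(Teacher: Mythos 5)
There is a genuine gap in the second regime of your hybrid argument, and it is not a bookkeeping issue. Once $m \gtrsim \Upsilon^3$ (equivalently $n \gg \Upsilon^2$, which is forced since $m \le \Upsilon(n-1)$), the available slack at $k = 4\Upsilon-1$ is $\binom{k+1}{2} - e(S_k) \le 4\Upsilon^2 + O(\Upsilon)$, a quantity independent of $n$, while $\|A\|_{(k)} \le \sqrt{2km} = \Theta(\Upsilon\sqrt{n})$ grows with $n$. Your rescue only absorbs the part of $m$ lying inside $\overline{S_k}$: splitting $m = a+b+c$ and using $\sqrt{2kc} \le c + k/2$ leaves the cross term $b = e(S_k,\overline{S_k})$ under the square root with nothing to cancel it, and $b$ can be of order $\Upsilon n$. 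Concretely, take $G = K_{\Upsilon,\,n-\Upsilon}$ (arboricity about $\Upsilon$): with $k = 4\Upsilon - 1$ one has $c = e(\overline{S_k}) = 0$, $a = \Upsilon(3\Upsilon-1)$, and $b \approx \Upsilon n$, so your bound gives $s_k \le m + \Upsilon(3\Upsilon-1) + \Theta(\Upsilon\sqrt n)$, which exceeds $m + \binom{k+1}{2} = m + 2\Upsilon(4\Upsilon-1)$ for large $n$ even though the true value $s_k = \Upsilon n + 2\Upsilon^2$ satisfies the conjecture with constant slack. Indeed, even the exact value $\|A\|_{(k)} = 2\sqrt{\Upsilon(n-\Upsilon)}$ is too large here, so no strengthening of the Ky Fan estimate for $A$ within the master inequality (\ref{eq:m1m2energy}) can close this regime; the approach itself, not the constants, fails.

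Ironically, the route you set aside is the paper's proof, and it does preserve the threshold: decompose $G$ into forests $T_1,\dots,T_\Upsilon$ and apply not Brouwer's bound for forests but the sharper estimate of Theorem 5 in \cite{HaMoTa10}, namely $\|L(T_i)\|_{(k)} \le m_i + 2k-1$. Subadditivity of the Ky Fan norm then gives $\|L(G)\|_{(k)} \le m + \Upsilon(2k-1)$, and $\Upsilon(2k-1) \le \binom{k+1}{2}$ holds exactly when $\Upsilon \le k(k+1)/(4k-2)$, which is implied by $k \ge 4\Upsilon - 1$. The constant is not lost because the per-forest error is linear in $k$ (with the right coefficient), whereas your spectral-norm error is of order $\sqrt{kn}$ and cannot be beaten down by the $O(\Upsilon^2)$ slack.
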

\begin{proof}
Note that, if a graph $G$ has arboricity $\Upsilon$, then decomposing it into forests $T_1,\ldots,T_\Upsilon$ yields, by Theorem 5 of \cite{HaMoTa10},
\begin{align*}
    \|L(G)\|_k &\leq \sum_i^\Upsilon \|L(T_i)\| \\
    &\leq \sum_i^\Upsilon \left [ m_i + (2k-1) \right ]\\
    &= m + \Upsilon (2k-1).
\end{align*}
Thus, if $\Upsilon \leq k(k+1)/(4k-2)$, then the conjectured bound is satisfied.  This holds if $k \geq 4\Upsilon - 1$.  
\end{proof}

For example, planar graphs have arboricity at most $3$, whence $\BC_k(G)$ holds for $k \geq 11$.

\begin{cor}
For a graph $G$ with maximum degree $\Delta$, $bc_k(G)$ holds for any $k \geq 2\Delta + 3$.
\end{cor}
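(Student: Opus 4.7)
The plan is to reduce this to Proposition \ref{prop:arboricity} by bounding the arboricity $\Upsilon(G)$ in terms of the maximum degree $\Delta$. I expect this to be essentially an arithmetic corollary rather than a substantive new argument, with no serious obstacle.

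The key step is to invoke the Nash--Williams formula $\Upsilon(G) = \max_H \lceil |E(H)|/(|V(H)|-1) \rceil$, where the maximum ranges over subgraphs $H$ with at least one edge. For any such subgraph on $n_H$ vertices, I would combine two bounds on $|E(H)|$: the trivial bound $|E(H)| \leq \binom{n_H}{2}$ and the max-degree bound $|E(H)| \leq \Delta n_H/2$. In the regime $n_H \leq \Delta+1$, the first gives $|E(H)|/(n_H-1) \leq n_H/2 \leq (\Delta+1)/2$; in the regime $n_H \geq \Delta+2$, the second gives $|E(H)|/(n_H-1) \leq \Delta n_H/(2(n_H-1)) \leq \Delta(\Delta+2)/(2(\Delta+1)) < (\Delta+1)/2$. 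Taking ceilings, $\Upsilon(G) \leq \lceil (\Delta+1)/2 \rceil \leq (\Delta+2)/2$.

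Feeding this into Proposition \ref{prop:arboricity}, $\BC_k(G)$ holds as soon as $k \geq 4\Upsilon - 1$, and the inequality $4 \cdot (\Delta+2)/2 - 1 = 2\Delta + 3$ then yields the claim. The only mild subtlety is splitting the Nash--Williams maximum into the two regimes above: without the split, one only obtains the crude estimate $\Upsilon \leq \Delta$ and correspondingly the weaker threshold $k \geq 4\Delta - 1$. The bound $2\Delta + 3$ is also tight within this chain of inequalities when $\Delta$ is even; for $\Delta$ odd one can actually sharpen it to $k \geq 2\Delta + 1$, but the uniform statement is as claimed.
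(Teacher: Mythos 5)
Your proof is correct and follows the same route as the paper: bound the arboricity by $\Upsilon \leq \lceil(\Delta+1)/2\rceil = \lfloor \Delta/2\rfloor + 1$ and feed this into Proposition \ref{prop:arboricity} to get the threshold $4\Upsilon - 1 \leq 2\Delta+3$. The only difference is that the paper cites this standard arboricity bound without proof, whereas you derive it from the Nash--Williams formula (correctly, via the two-regime estimate).
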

\begin{proof}
Since $\Upsilon \leq \lfloor \Delta/2 \rfloor + 1$, we also have that bc holds for $k \geq 2 \Delta + 3$ by Proposition \ref{prop:arboricity}.
\end{proof}

\section{Trace of the Square} \label{sec:variance}

We may write
\begin{align*}
\tr (L-\lambda_k I)^2 &= \sum_{i=1}^n (\lambda_i - \lambda_k)^2 \\
&= \sum_{i=1}^k (\lambda_i - \lambda_k)^2 + \sum_{i=k+1}^{n} (\lambda_k - \lambda_i)^2 \\
& \geq \frac{\left (\sum_{i=1}^k \lambda_i - \lambda_k \right )^2}{k} + \frac{\left (\sum_{i=k+1}^n \lambda_k - \lambda_i \right )^2}{n-k} \\
&= \frac{(s_k - k \lambda_k)^2}{k} + \frac{((n-k) \lambda_k - 2m + s_k)^2}{n-k}
\end{align*}
On the other hand, writing $D$ for $\sum_v d_v^2$,
\begin{align*}
\tr (L-\lambda_k I)^2 &= D + 2m - 2 \lambda_k \tr L + \lambda_k^2 n \\
&= D + 2m - 4m \lambda_k + \lambda_k^2 n,
\end{align*}
so,
\begin{equation} \label{eq4}
D + 2m - 4m \lambda_k + \lambda_k^2 n \geq \frac{(s_k - k \lambda_k)^2}{k} + \frac{((n-k) \lambda_k - 2m + s_k)^2}{n-k}.
\end{equation}
Simplifying, we obtain
$$
n s_k^2 - 4mks_k + (4km^2 - k(n-k)(D+2m)) \leq 0
$$
and solving for $s_k$, reparametrizing with $\alpha = k/n$, $m = \beta n^2/2$, and $\tau = D/n^3$,
\begin{align*}
\frac{s_k}{n^2}  & \leq \frac{2km}{n^3} + \sqrt{ \frac{4m^2k^2}{n^6} - \frac{4km^2}{n^5} + \frac{k(n-k)(D+2m)}{n^5}} \\
& = \frac{2km}{n^3} + \sqrt{ - \frac{4m^2k(n-k)}{n^6} + \frac{k(n-k)(D+2m)}{n^5}} \\
&  = \alpha \beta + \sqrt{\alpha (1- \alpha)} \sqrt{\tau - \beta^2 + \beta/n } .
\end{align*}
The target upper bound from Brouwer's Conjecture -- at least asymptotically -- is $s_k/n^2 \leq \beta/2 + \alpha^2/2$.  Note that $\tau = D/n^3$ satisfies $\beta^2 \leq \tau$ by Cauchy-Schwarz, so the quantity under the radical is positive.  When the graph is regular, i.e., $\tau = \beta^2$, we have $s_k/n^2 \lesssim \alpha \beta \leq \beta/2 + \alpha^2/2$.
Since 
$$
\frac{\alpha^2 + \beta}{2} - \alpha \beta = \frac{\beta - \beta^2}{2} + \frac{\alpha^2 + \beta^2}{2} - \alpha \beta \geq \frac{\beta(1-\beta)}{2}
$$
then $\tau \leq \beta^2(1 + (1-\beta)^2) - \beta/n$, i.e.,
$$
\sum_v d_v^2 \leq \left (1 - \frac{2m}{n^2} + \frac{2m^2}{n^4} \right ) \frac{8m^2}{n}  - \frac{2m}{n^3}
$$
implies
\begin{align*}
\frac{s_k}{n^2}  & \leq \alpha \beta + \sqrt{\alpha (1- \alpha)} \sqrt{\tau - \beta^2 + \beta/n } \\
& \leq \alpha \beta + \sqrt{\alpha (1- \alpha)} \sqrt{\beta^2 (1+(1-\beta)^2) - \beta/n - \beta^2 + \beta/n } \\
& = \alpha \beta + \sqrt{(1-\beta)^2 \alpha (1- \alpha)} \beta \\
& \leq \frac{\alpha^2 + \beta}{2} - \frac{\beta(1-\beta)}{2} + \frac{(1-\beta) \beta}{2}  \\
& = \frac{\alpha^2 + \beta}{2} = \frac{m}{n^2} + \frac{k^2}{2 n^2} < \frac{1}{n^2} \left ( m + \binom{k+1}{2} \right ).
\end{align*}

Thus,

\begin{lemma} \label{lem1}  $\BC_k(G)$ holds for any graph $G$ with
$$
\sum_v d_v^2 \leq \left (1 - \frac{2m}{n^2} + \frac{2m^2}{n^4} \right ) \frac{8m^2}{n}  - \frac{2m}{n^3}.
$$
\end{lemma}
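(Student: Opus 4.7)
The plan is to bound $s_k$ by computing $\tr(L - \lambda_k I)^2$ in two different ways. Setting $D := \sum_v d_v^2$, the standard identities $\tr L = 2m$ and $\tr L^2 = D + 2m$ give the direct expansion $D + 2m - 4m\lambda_k + n\lambda_k^2$. On the other hand, writing the same trace as $\sum_i (\lambda_i - \lambda_k)^2$ and splitting the sum at $i = k$, the power-mean (Cauchy--Schwarz) inequality applied to each piece gives the lower bound $(s_k - k\lambda_k)^2/k + ((n-k)\lambda_k - 2m + s_k)^2/(n-k)$, using $\sum_{i > k} \lambda_i = 2m - s_k$.

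The key algebraic point is that equating these two expressions causes all $\lambda_k$-dependence to cancel: the lower bound expands as $n\lambda_k^2 - 4m\lambda_k + s_k^2/k + (2m-s_k)^2/(n-k)$, leaving the clean inequality
\[
D + 2m \;\geq\; \frac{s_k^2}{k} + \frac{(2m - s_k)^2}{n-k}.
\]
Clearing denominators produces a quadratic $n s_k^2 - 4mk\, s_k + 4km^2 - k(n-k)(D + 2m) \leq 0$, which I would solve via the quadratic formula for the maximal admissible $s_k$. Reparametrizing by $\alpha = k/n$, $\beta = 2m/n^2$, $\tau = D/n^3$, the resulting bound becomes
\[
\frac{s_k}{n^2} \;\leq\; \alpha\beta + \sqrt{\alpha(1-\alpha)\bigl(\tau - \beta^2 + \beta/n\bigr)},
\]
with positivity of the radicand guaranteed by $\tau \geq \beta^2$ (Cauchy--Schwarz on the degree sequence).

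To match the Brouwer target $s_k/n^2 \leq \beta/2 + \alpha^2/2$, I would then use the identity
\[
\frac{\alpha^2 + \beta}{2} - \alpha\beta \;=\; \frac{(\alpha - \beta)^2}{2} + \frac{\beta(1-\beta)}{2} \;\geq\; \frac{\beta(1-\beta)}{2}.
\]
It therefore suffices to ensure $\sqrt{\alpha(1-\alpha)(\tau - \beta^2 + \beta/n)} \leq \beta(1-\beta)/2$, and since $\alpha(1-\alpha) \leq 1/4$ trivially, this is implied by $\tau \leq \beta^2(1 + (1-\beta)^2) - \beta/n$. Substituting the definitions of $\tau$ and $\beta$ recovers the stated hypothesis on $\sum_v d_v^2$.

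The step requiring luck rather than effort is the $\lambda_k$-cancellation at the start: this is what converts two a priori different trace expressions into an inequality involving only $s_k$, and without it the argument would be stuck with an unknown auxiliary parameter. The use of the trivial bound $\alpha(1-\alpha) \leq 1/4$ is noticeably lossy, so a sharper, $\alpha$-sensitive version of the lemma is plausibly obtainable with more care, although it is not needed for the form stated.
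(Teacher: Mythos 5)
Your proposal is correct and is essentially the paper's own proof: the two evaluations of $\tr(L-\lambda_k I)^2$, the cancellation of the $\lambda_k$-terms, the resulting quadratic $n s_k^2 - 4mk\,s_k + 4km^2 - k(n-k)(D+2m) \leq 0$, the reparametrization by $\alpha,\beta,\tau$, and the final comparison via $\frac{\alpha^2+\beta}{2}-\alpha\beta \geq \frac{\beta(1-\beta)}{2}$ together with $\alpha(1-\alpha)\leq \frac14$ all match the paper step for step. One caveat you share with the paper's write-up: rescaling $\tau \leq \beta^2(1+(1-\beta)^2)-\beta/n$ by $n^3$ actually yields $\sum_v d_v^2 \leq \bigl(1-\tfrac{2m}{n^2}+\tfrac{2m^2}{n^4}\bigr)\tfrac{8m^2}{n} - 2m$ rather than $-\tfrac{2m}{n^3}$, so the "substitution recovers the stated hypothesis" step hides the same $n^3$ scaling slip in the additive term that appears in the paper itself.
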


\noindent Below, when we refer to the {\em variance} of a sequence $\{a_i\}_{i=1}^N$, we mean the variance of the random variable $a_X$, where $X$ takes a uniformly random value in $[N]$.

\begin{theorem} \label{thm1}
 $\BC_k(G)$ holds for any graph $G$ whose degree sequence has variance at most $[\beta(1-\beta) n]^2 - \beta/n^2$, where $\beta = 2m/n^2$ is the edge density.
\end{theorem}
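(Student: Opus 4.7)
The plan is to derive Theorem \ref{thm1} as a direct corollary of Lemma \ref{lem1} by rewriting its hypothesis on $\sum_v d_v^2$ in terms of the variance of the degree sequence. Let $\bar d = 2m/n = \beta n$ denote the average degree. By the definition of variance of the sequence $\{d_v\}$,
\begin{equation*}
\var(d) \;=\; \frac{1}{n}\sum_{v} d_v^{\,2} \;-\; \bar d^{\,2},
\end{equation*}
so that $\sum_v d_v^2 = n\,\var(d) + n\bar d^{\,2} = n\,\var(d) + 4m^2/n$.

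Next I would substitute this into the bound of Lemma \ref{lem1} and reparametrize using $\beta = 2m/n^2$. Concretely, note the identities $4m^2/n = \beta^2 n^3$, $8m^2/n = 2\beta^2 n^3$, $2m^2/n^4 = \beta^2/2$, $2m/n^2 = \beta$, and $2m/n^3 = \beta/n$. The hypothesis of Lemma \ref{lem1} then becomes
\begin{align*}
n\,\var(d) + \beta^2 n^3 &\;\leq\; \Bigl(1 - \beta + \tfrac{\beta^2}{2}\Bigr)\cdot 2\beta^2 n^3 \;-\; \frac{\beta}{n} \\
&\;=\; \beta^2 n^3\bigl(2 - 2\beta + \beta^2\bigr) - \frac{\beta}{n}.
\end{align*}
Subtracting $\beta^2 n^3$ from both sides and factoring yields $n\,\var(d) \leq \beta^2 n^3 (1-\beta)^2 - \beta/n$, i.e.,
\begin{equation*}
\var(d) \;\leq\; \bigl[\beta(1-\beta)\,n\bigr]^{2} \;-\; \frac{\beta}{n^{2}},
\end{equation*}
which is exactly the hypothesis of Theorem \ref{thm1}. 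Conversely, any graph satisfying this variance bound satisfies the $\sum_v d_v^2$ bound of Lemma \ref{lem1}, so $\BC_k(G)$ holds for all $k$.

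There is really no obstacle here: all the analytic work has been packaged into Lemma \ref{lem1}, and the proof of Theorem \ref{thm1} is simply the translation $\sum_v d_v^2 \leftrightarrow n\,\var(d) + 4m^2/n$ followed by a one-line algebraic rearrangement. The only thing worth flagging is that the factorization $(2 - 2\beta + \beta^2) - 1 = (1-\beta)^2$ is what produces the clean form $[\beta(1-\beta)n]^2$ on the right-hand side; getting this to match cleanly is what motivates the particular choice of constants in the statement of Lemma \ref{lem1}.
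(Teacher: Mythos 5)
Your proposal is correct and is essentially the same argument as in the paper: both proofs reduce Theorem \ref{thm1} to Lemma \ref{lem1} via the identity $\sum_v d_v^2 = n\var(\{d_v\}) + 4m^2/n$ and the reparametrization $\beta = 2m/n^2$, with the factorization $(2-2\beta+\beta^2)-1=(1-\beta)^2$ giving the stated variance bound. The algebra checks out, so nothing further is needed.
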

\begin{proof}
The hypothesis yields
\begin{align*} 
\beta^2 (1-\beta)^2 n^2 - \beta/n^2 & \geq \var(\{d_v\})\\
&= \frac{1}{n} \sum_v d_v^2 - \frac{1}{n^2} \left ( \sum_v d_v \right )^2 \\
&= \frac{1}{n} \sum_v d_v^2 - \frac{4m^2}{n^2} = \frac{1}{n} \sum_v d_v^2 - \beta^2 n^2
\end{align*}
so that
\begin{align*}
\sum_v d_v^2& \leq \beta^2 (1-\beta)^2 n^3 + \beta^2 n^3 - \frac{\beta}{n} \\
& = n^3 \left ( 2 - 2\beta + \beta^2 \right ) \beta^2 - \frac{\beta}{n} \\
& = \left ( 1 - \frac{2m}{n^2} + \frac{2m^2}{n^4} \right ) \frac{8m^2}{n} - \frac{2m}{n^3},
\end{align*}
and the result follows by applying Lemma \ref{lem1}.
\end{proof}

Note that the variance of the degree sequence of a graph sampled from the Erd\H{o}s-R\'enyi model $G(n,p)$ is $\leq n \beta (1-\beta) (1+o(1))$ with high probability, and $n \beta (1-\beta) \gg \beta^2(1-\beta)^2n^2-\beta/n$ as long as $p = \omega(n^{-1})$ and $1-p = \omega(n^{-1})$, this implies Rocha's result (\cite{Ro19}) that the Brouwer Conjecture holds for random graphs almost surely.  

\begin{cor}
Denoting the maximum, average, and minimum degrees by $\Delta$, $\overline{d}$, and $\delta$, respectively, 
$$
\Delta - \delta \leq \frac{2 \overline{d} \left ( n - \overline{d} \right )}{n} - 1
$$
for a graph $G$, then Brouwer is true for $G$.
\end{cor}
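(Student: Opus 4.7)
The plan is to deduce the corollary directly from Theorem~\ref{thm1} by converting the degree-gap hypothesis into a variance bound. Setting $X := \overline{d}(n-\overline{d})/n = \beta(1-\beta)n$, the hypothesis reads $\Delta - \delta \leq 2X - 1$, while Theorem~\ref{thm1} requires $\var(\{d_v\}) \leq X^2 - \beta/n^2$. So the whole task reduces to showing that the first inequality forces the second.

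First I would invoke a Popoviciu-type inequality: any finite sequence taking values in $[\delta,\Delta]$ has variance at most $(\Delta-\delta)^2/4$. This follows at once from
\begin{equation*}
\var(\{d_v\}) \leq \frac{1}{n}\sum_v \left(d_v - \frac{\Delta+\delta}{2}\right)^2 \leq \left(\frac{\Delta-\delta}{2}\right)^2,
\end{equation*}
where the first step uses that the mean minimizes the sum of squared deviations and the second uses $d_v \in [\delta,\Delta]$. Combining with the hypothesis yields $\var(\{d_v\}) \leq (2X-1)^2/4 = X^2 - X + 1/4$.

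To close the gap to Theorem~\ref{thm1}, it remains to check $X^2 - X + 1/4 \leq X^2 - \beta/n^2$, i.e., $X \geq 1/4 + \beta/n^2$. The hypothesis is vacuous unless $2X - 1 \geq 0$ (because $\Delta - \delta \geq 0$ always), so we may assume $X \geq 1/2$. Together with the crude estimate $\beta/n^2 = 2m/n^4 \leq 1/n^2 \leq 1/4$, valid for $n \geq 2$, this gives $X - 1/4 \geq 1/4 \geq \beta/n^2$; the cases $n \leq 2$ are trivial since BC holds for every graph on at most two vertices. Theorem~\ref{thm1} then concludes the proof.

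I do not foresee any substantial obstacle. The key structural observation that makes the argument work so cleanly is that the quantity $\overline{d}(n-\overline{d})/n$ appearing in the corollary's hypothesis \emph{coincides} with $\beta(1-\beta)n$, the standard-deviation threshold of Theorem~\ref{thm1}; this is precisely what lets Popoviciu's inequality absorb the hypothesis with exactly the additive $1/2$ needed to match the two conditions, and the residual term $\beta/n^2$ is dwarfed by the slack $X - 1/4 \geq 1/4$.
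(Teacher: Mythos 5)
Your proof is correct and follows essentially the same route as the paper: apply Popoviciu's inequality $\var(\{d_v\}) \leq (\Delta-\delta)^2/4$ to the degree-gap hypothesis and then invoke Theorem~\ref{thm1}. The only difference is minor bookkeeping of the slack term -- you use non-vacuity ($\beta(1-\beta)n \geq 1/2$) to absorb the $1/4 + \beta/n^2$, whereas the paper weakens $-1$ to $-2/n^2$ and uses $(1-x)^2 \leq 1-x$ for nonempty graphs -- and both versions are sound.
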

\begin{proof}
Popoviciu's inequality states that $\var(\{d_v\}) \leq (\Delta-\delta)^2/4$.  Thus, by Theorem \ref{thm1}, since $2 \overline{d} \left ( 1 - \overline{d}/n \right ) - 1 \leq 2 \overline{d} (1- \overline{d}/n ) - 2/n^2 = 2\beta(1-\beta)n - 2/n^2$,
\begin{align*}
\var(\{d_v\}) & \leq \frac{(2\beta(1-\beta)n - 2/n^2)^2}{4} \\
& \leq \beta^2(1-\beta)^2n^2 \cdot \left ( 1 - \frac{1}{\beta(1-\beta)n^3} \right )^2 \\
& \leq \beta^2(1-\beta)^2n^2 \cdot \left ( 1 - \frac{1}{\beta(1-\beta)n^3} \right )
\end{align*}
as long as $\beta(1-\beta)n^3 \geq 1$, which is satisfied if the graph is nonempty.  Continuing,
\begin{align*}
\var(\{d_v\}) & \leq \beta^2(1-\beta)^2n^2 - \frac{\beta^2(1-\beta)^2n^2}{\beta(1-\beta)n^3} \\
& \leq \beta^2(1-\beta)^2n^2 - \beta(1-\beta)/n^2 \leq \beta^2(1-\beta)^2n^2 - \beta/n^2
\end{align*}
since $1-\beta \geq 1/n$, from which the result follows per Theorem \ref{thm1}.
\end{proof}

\begin{cor}
If $G$ belongs to a class of graphs with $\Delta+1 < (2 - \epsilon) \overline{d}$ for any fixed $\epsilon > 0$ and $m = o(n^2)$, then $\BC_k(G)$ holds for all $k$ and all sufficiently large $n$.
\end{cor}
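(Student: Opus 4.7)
The plan is to reduce this to the previous corollary, which states that Brouwer's Conjecture holds whenever $\Delta - \delta \leq 2\overline{d}(n - \overline{d})/n - 1$. So the whole task is to verify this degree-gap inequality under the stated hypotheses, for all $n$ large enough.

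First, I would use the trivial bound $\delta \geq 0$ to get $\Delta - \delta + 1 \leq \Delta + 1$, and then invoke the hypothesis $\Delta + 1 < (2 - \epsilon) \overline{d}$ directly. This reduces the problem to showing
\[
(2 - \epsilon)\overline{d} \;\leq\; \frac{2\overline{d}(n-\overline{d})}{n} \;=\; 2\overline{d} - \frac{2\overline{d}^2}{n},
\]
which, after dividing through by $\overline{d} > 0$ (we may assume $G$ is nonempty, else the conjecture is trivial), is equivalent to $2\overline{d}/n \leq \epsilon$, i.e., $4m/n^2 \leq \epsilon$.

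Second, since $m = o(n^2)$, the ratio $4m/n^2 \to 0$ as $n \to \infty$. Thus, for all sufficiently large $n$ (depending on the fixed $\epsilon$), we have $4m/n^2 < \epsilon$, and the chain of inequalities above goes through. Applying the preceding corollary then yields $\BC_k(G)$ for all $k$.

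There is essentially no serious obstacle here: the content of the result is the observation that in the sparse regime $m = o(n^2)$ the ``correction term'' $2\overline{d}^2/n$ in the previous corollary's hypothesis is negligible compared to $\epsilon \overline{d}$, so the cleaner-looking near-regularity condition $\Delta + 1 < (2 - \epsilon) \overline{d}$ suffices. The only thing worth noting carefully is that the strict inequality hypothesis gives us $\Delta + 1 \leq (2-\epsilon) \overline{d}$ (for integer-valued $\Delta$ this is automatic, but even otherwise the strictness combined with the eventual strict inequality $4m/n^2 < \epsilon$ for large $n$ yields the required non-strict bound).
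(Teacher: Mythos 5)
Your proposal is correct and follows essentially the same route as the paper: both reduce to the preceding corollary by checking that $\Delta - \delta \leq 2\overline{d}(n-\overline{d})/n - 1$ follows from $\Delta + 1 < (2-\epsilon)\overline{d}$ once $2\overline{d}/n = 4m/n^2 = o(1)$ drops below $\epsilon$ for large $n$. The only cosmetic difference is that you rearrange the inequality to the condition $4m/n^2 \leq \epsilon$, while the paper writes the same comparison as $2\overline{d}(1-o(1)) - 1 > (2-\epsilon)\overline{d} - 1 > \Delta \geq \Delta - \delta$.
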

\begin{proof}
\begin{align*}
\frac{2 \overline{d} \left ( n - \overline{d} \right )}{n} - 1 &= 2 \overline{d} \left ( 1 - \frac{2m}{n^2} \right )- 1 \\
&= 2 \overline{d} \left ( 1 - o(1) \right )- 1 \\
&> \overline{d} (2-\epsilon) - 1 > \Delta \geq \Delta - \delta.
\end{align*}
\end{proof}

Thus, for example, any sufficiently large $K_r$-free graph with $\Delta < (2-\epsilon) \overline{d}$ satisfies Brouwer's Conjecture.

\section{Nearly-Split Graphs} \label{sec:splitness}

\begin{prop} \label{prop:almostsplit}
Suppose $G$ violates $\BC_k(G)$.  Then
$$
\left | \binom{S_k}{2} \setminus E(G) \right | \leq k\sqrt{2n} - k
$$
and
$$
\left | \overline{S_k} \cap E(G) \right | \leq k\sqrt{2n} - k.
$$
That is, there is a split graph $G'$ with blocks $S_k$ and $\overline{S_k}$ which differs from $G$ on a set of at most $k \sqrt{8n} - 2k$ edges, and, in particular, the splittance of $G$ satisfies $\sigma(G) < k \sqrt{8n}$.
\end{prop}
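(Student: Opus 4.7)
The plan is to chain together equation (\ref{eq:m1m2energy}), Nikiforov's bound $\|A\|_{(k)} \leq \sqrt{2km}$, and the preliminary observation (from the discussion preceding Proposition \ref{prop:arboricity}) that any graph violating $\BC_k$ must satisfy $k > m/n$.

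First I would suppose $\BC_k(G)$ fails, so $\|L\|_{(k)} > m + \binom{k+1}{2}$. Combining this with (\ref{eq:m1m2energy}) and Nikiforov's inequality yields
$$\binom{k+1}{2} < e(S_k) - e(\overline{S_k}) + \sqrt{2km}.$$
Introducing the abbreviations $a := \binom{k}{2} - e(S_k)$ (the ``missing'' edges inside the presumed clique $S_k$) and $b := e(\overline{S_k})$ (the ``extra'' edges inside the presumed independent set $\overline{S_k}$) lets me rewrite $e(S_k) - e(\overline{S_k}) = \binom{k}{2} - a - b$ and collapse the inequality to
$$a + b < \sqrt{2km} - k.$$

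Next I would invoke $k > m/n$, i.e.\ $m < kn$: otherwise the trivial bound $\|L\|_{(k)} \leq kn$ combined with $kn \leq m + \binom{k+1}{2}$ would already give $\BC_k(G)$. This upgrades $\sqrt{2km} < \sqrt{2k \cdot kn} = k\sqrt{2n}$, so $a + b < k\sqrt{2n} - k$. Since $a,b \geq 0$, each summand is individually at most $k\sqrt{2n} - k$, which are exactly the two inequalities in the proposition.

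Finally, the split graph $G'$ whose clique is $S_k$, whose independent set is $\overline{S_k}$, and whose $S_k$--$\overline{S_k}$ edges agree with those of $G$, differs from $G$ on precisely $a + b$ edges, yielding $\sigma(G) \leq a + b \leq 2(k\sqrt{2n} - k) = k\sqrt{8n} - 2k < k\sqrt{8n}$. There is no real obstacle here; the only substantive insight is recognizing that the preliminary range restriction $k > m/n$ is precisely what converts the edge-count bound $\sqrt{2km}$ coming from Nikiforov into the vertex-count bound $k\sqrt{2n}$ needed to measure splittance, and that the individual bounds on $a$ and $b$ are immediate from nonnegativity at the cost of a factor of two in the combined estimate.
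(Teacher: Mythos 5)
Your proposal is correct and is essentially the paper's own argument: combine (\ref{eq:m1m2energy}) with $\|A\|_{(k)} \leq \sqrt{2km}$, use the preliminary observation that a violation forces $k > m/n$ (so $\sqrt{2km} < k\sqrt{2n}$), and deduce $\bigl(\binom{k}{2} - e(S_k)\bigr) + e(\overline{S_k}) < k\sqrt{2n} - k$, from which both stated bounds and the splittance estimate follow by nonnegativity. Your explicit bookkeeping with $a$ and $b$, and the remark that the sum itself is already below $k\sqrt{2n}-k$ (so the factor of two is only lost in the stated combined bound), just makes explicit what the paper leaves terse.
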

\begin{proof}
Suppose $G$ violates $\BC_k(G)$.  Then, by (\ref{eq:m1m2energy}),
$$
m + \binom{k+1}{2} < \|L(G)\|_k \leq m + e(S_k) - e(\overline{S_k}) + \|A(G)\|_{(k)}
$$
and, when combined with $\|A\|_{(k)} \leq \sqrt{2km}$ and $k > m/n$, we obtain
$$
e(S_k) > \binom{k}{2} + k - k\sqrt{2n}
$$
and
$$
e(\overline{S_k}) < k \sqrt{2n} - k.
$$
In particular, $G$ is at most $2 k \sqrt{2n} - 2k <  k \sqrt{8n}$ edges away from being a split graph with bipartition $(S_k,\overline{S_k})$.
\end{proof}

\begin{cor}
$\BC_k(G)$ holds if $k \leq \sigma(G)/\sqrt{8n}$, and so in particular holds for all $k$ if $\sigma(G) \geq \sqrt{2} n^{3/2}$.
\end{cor}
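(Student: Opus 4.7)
The first claim is simply the contrapositive of the last sentence of Proposition \ref{prop:almostsplit}: that proposition asserts that $\BC_k(G)$ failing implies $\sigma(G) < k\sqrt{8n}$, so the hypothesis $\sigma(G) \geq k\sqrt{8n}$ (equivalently, $k \leq \sigma(G)/\sqrt{8n}$) forces $\BC_k(G)$ to hold. No further work is required for this part.

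For the ``for all $k$'' assertion, the cases $k \in \{n-1,n\}$ are trivial, as noted in the introduction, so one only has to handle $k \leq n-2$. A naive application of the first part would demand $\sigma(G) \geq (n-2)\sqrt{8n} \sim 2\sqrt{2}\,n^{3/2}$, off from the stated threshold by a factor of $2$. To recover the sharper constant $\sqrt{2}\,n^{3/2}$, the plan is to exploit two symmetries. First, by (\ref{eq:complementation}), $\BC_k(G)$ is equivalent to $\BC_{n-k-1}(\overline{G})$. Second, the splittance is complementation-invariant: if $H$ is a nearest split graph to $G$, then $\overline{H}$ is a split graph at the same edit distance from $\overline{G}$ (with additions and deletions interchanged), and the complement of a split graph is split. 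Hence $\sigma(G) = \sigma(\overline{G})$.

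Combining these observations, any putative violation of $\BC_k(G)$ for some $k \in \{1,\ldots,n-2\}$ yields a violation of $\BC_{k'}(H)$ for some $H \in \{G,\overline{G}\}$ and $k' = \min(k,n-k-1) \leq (n-1)/2$. Applying the first part of the corollary to $H$ then gives
$$
\sigma(G) \;=\; \sigma(H) \;<\; k'\sqrt{8n} \;\leq\; \tfrac{n-1}{2}\sqrt{8n} \;=\; (n-1)\sqrt{2n} \;<\; \sqrt{2}\,n^{3/2},
$$
contradicting the hypothesis $\sigma(G) \geq \sqrt{2}\,n^{3/2}$.

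The overall argument is essentially bookkeeping around Proposition \ref{prop:almostsplit}, and I anticipate no serious obstacle. The only step that requires a little care is the identity $\sigma(G) = \sigma(\overline{G})$, which follows from the fact noted above that a minimum edit sequence from $G$ to a split graph $S$ is also an edit sequence (of the same length) from $\overline{G}$ to the split graph $\overline{S}$. Everything else is a direct translation of the contrapositive of Proposition \ref{prop:almostsplit} plus the complementation symmetry already recorded in the introduction.
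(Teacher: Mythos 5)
Your proof is correct and follows essentially the same route as the paper: the first claim is the contrapositive of Proposition \ref{prop:almostsplit}, and the ``all $k$'' claim reduces to $k \leq n/2$ via the complementation relation (\ref{eq:complementation}). You make explicit two details the paper's one-line proof leaves implicit --- the invariance $\sigma(G)=\sigma(\overline{G})$ and the trivial cases $k\in\{n-1,n\}$ --- which is a welcome clarification but not a different argument.
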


\begin{proof}
This is just an application of Proposition \ref{prop:almostsplit}, with the observation that, by (\ref{eq:complementation}) we can assume that $k \leq n/2$.
\end{proof}

\begin{prop} \label{prop:onlym1}
Fix a $G$ and $k \in [n]$, let $G_1 = (V(G), \binom{S_k}{2} \setminus E(G))$ and $G_2 = (V(G),E(G) \cap \binom{\overline{S_k}}{2})$, and denote $m_i = |E(G_i)|$ for $i=1,2$.  Then
$$
\| L(G) \|_{(k)} \leq m + \binom{k+1}{2} + \min\{m_1 - m_2 + \sqrt{2 km_2}, m_2 - m_1 + \sqrt{2 (n-k)m_1} \}.
$$
\end{prop}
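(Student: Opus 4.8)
I would prove the two bounds more or less symmetrically; the second is obtained by running the analogue of the argument below on the complement $\overline G$ (where $\overline{S_k}$, of size $n-k$, plays the role of the clique side and $m_1,m_2$ are interchanged) and translating back via (\ref{eq:complementation}), so I concentrate on
$$\|L(G)\|_{(k)} \le m + \binom{k+1}{2} + m_1 - m_2 + \sqrt{2km_2}.$$

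The skeleton is to reduce to a split graph and invoke Mayank's theorem. Let $S$ be the split graph on $V(G)$ with edge set $(E(G)\cup E(G_1))\setminus E(G_2)$, so that $S_k$ is a clique, $\overline{S_k}$ is independent, and $|E(S)| = m + m_1 - m_2$; since $S$ is split, $\BC_k(S)$ holds (\cite{Ma10}), i.e. $\|L(S)\|_{(k)} \le |E(S)| + \binom{k+1}{2}$, so it suffices to bound $\|L(G)\|_{(k)}$ by $|E(S)| + \binom{k+1}{2} + \sqrt{2km_2}$. The $m_1$ absent clique edges cost nothing: $L(G\cup G_1) = L(G) + L(G_1) \succeq L(G)$, so Weyl's monotonicity gives $\|L(G)\|_{(k)} = s_k(L(G)) \le s_k(L(G\cup G_1))$, and since $G\cup G_1 = S\cup G_2$ we have $L(G\cup G_1) = L(S) + L(G_2)$. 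Thus everything comes down to bounding $s_k(L(S)+L(G_2))$, the top-$k$ eigenvalue sum of the split-graph Laplacian perturbed by $L(G_2)$, a positive semidefinite matrix of trace $2m_2$ supported entirely on the coordinates of the independent set $\overline{S_k}$.

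Controlling this perturbation is the crux, and the step I expect to be the real work. A plain triangle inequality is far too lossy: writing $L(G_2) = D(G_2) - A(G_2)$ and using $\|A(G_2)\|_{(k)} \le \sqrt{2km_2}$ (\cite{Ni16}) still leaves $D(G_2)$, and $s_k(L(S)+D(G_2))$ can genuinely exceed $|E(S)| + \binom{k+1}{2}$ — already for $G = K_{2k}\cup\overline{K_k}$ — so Mayank's bound cannot simply be quoted for the perturbed matrix, while replacing $s_k(D(G_2))$ by its trace $2m_2$ swamps the $\sqrt{2km_2}$ of room that remains. What must be exploited is eigenvector localization: the eigenvectors of $L(S)$ realizing its largest eigenvalues place almost all of their mass on the clique $S_k$, precisely where $L(G_2)$ acts trivially, so $L(G_2)$ can recruit only $O(\sqrt{2km_2})$ of extra top-$k$ energy. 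To make this quantitative I would pass to the $(S_k,\overline{S_k})$ block form of $L(S)+L(G_2)$, peel off the off-diagonal (cut) block via the pinching inequality or Cauchy interlacing, and, on the $\overline{S_k}$ block, trade the diagonal part of $L(G_2)$ against Nikiforov's Ky Fan bound for its adjacency part — crucially combining the two contributions rather than merely adding them, which is what turns the would-be $2m_2$ into $\sqrt{2km_2}$. The mirror computation on $\overline G$, fed through (\ref{eq:complementation}), then yields the second estimate, and hence the claimed minimum.
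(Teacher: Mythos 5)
Your outer skeleton coincides with the paper's: pass to the split graph $S$ on blocks $(S_k,\overline{S_k})$ with edge set $(E(G)\cup E(G_1))\setminus E(G_2)$, quote Mayank's theorem \cite{Ma10} for $\|L(S)\|_{(k)}$, discard $G_1$ using positive semidefiniteness, and obtain the second estimate via (\ref{eq:complementation}). But the paper's proof then ends in one line, by exactly the ``plain triangle inequality'' route you reject: from $L(G)=L(S)-L(G_1)+L(G_2)$, Ky Fan's inequality gives $\|L(G)\|_{(k)}\le\|L(S)\|_{(k)}+\sum_{i=1}^k\lambda_i\bigl(L(G_2)-L(G_1)\bigr)$; since $L(G_1)$ and $L(G_2)$ are supported on the disjoint coordinate sets $S_k$ and $\overline{S_k}$, the second term is at most $\|L(G_2)\|_{(k)}$, which the paper bounds by $\sqrt{2km_2}$. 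There is no eigenvector localization, pinching, interlacing, or block analysis anywhere in the paper's argument.

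The genuine gap in your proposal is that the inequality you yourself identify as the crux, $s_k(L(S)+L(G_2))\le |E(S)|+\binom{k+1}{2}+\sqrt{2km_2}$, is never proved: you label it ``the real work'' and offer only a qualitative plan (top eigenvectors of $L(S)$ localize on $S_k$; peel off the cut block; ``trade'' $D(G_2)$ against $\|A(G_2)\|_{(k)}$) with no quantitative lemma, no proof that the claimed localization holds, and no route to the exact constant $\sqrt{2km_2}$ that the statement and its later applications require. As submitted this is a strategy, not a proof, so the proposition is not established. I will grant that your unease points at the one place where the paper is tersest: the bound $\|L(G_2)\|_{(k)}\le\sqrt{2km_2}$ is the inequality of \cite{Ni16} for adjacency matrices, and it does not hold verbatim for Laplacians (a star $K_{1,r}$ has $\|L\|_{(k)}=r+k>\sqrt{2kr}$ for $k\le r$), so that step merits more justification than either you or the paper supplies; but flagging a difficulty is not the same as resolving it, and your write-up leaves the key estimate unproven.
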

\begin{proof}
Since $L(G) = L(G \cup G_1 \setminus G_2) - L(G_1) + L(G_2)$, by Ky Fan's inequality, 
\begin{align*}
    \|L(G)\|_{(k)} &\leq \|L(G \cup G_1 \setminus G_2)\|_{(k)} + \sum_{i=1}^k \lambda_i(L(G_2)-L(G_1)) \\
    &\leq \|L(G \cup G_1 \setminus G_2)\|_{(k)} + \|L(G_2)\|_{(k)} \\
    &\leq m + m_1 - m_2 + \binom{k+1}{2} + \sqrt{2km_2} \\
    &= m + \binom{k+1}{2} + m_1 + \sqrt{2km_2} - m_2.
\end{align*}
where the second inequality follows because $L(G_2)$ and $L(G_1)$ are positive semidefinite and $V(G_1) \cap V(G_2) = \emptyset$ (so the set of eigenvalues of $L(G_2)-L(G_1)$ is just the union of the nonnegative spectrum of $L(G_2)$ and the nonpositive spectrum of $-L(G_1)$); and the third inequality follows because $G \setminus G_2$ is a split graph with $m-m_2$ edges (applying Mayank's confirmation of Brouwer's Conjecture for split graphs, see \cite{Ma10}).  The same statement with $m_1$ and $m_2$ swapped follows by applying (\ref{eq:complementation}).
\end{proof}

\begin{theorem}
For any $G$,
\begin{align*}
    \|L(G)\|_{(k)} - \left ( m + \binom{k+1}{2} \right ) &\leq 2^{3/4} n^{1/4} \sqrt{k(n-k)} \\
    & \leq 2^{3/4} n^{3/4} \sqrt{k} = O(n^{5/4}).
\end{align*}
If $t = t(G)$, then
$$
    \|L(G)\|_{(k)} \leq m + \binom{k+1}{2} + \frac{2^{3/2}}{\sqrt{1-t}} \cdot n,
$$
and for bipartite $G$,
$$
    \|L(G)\|_{(k)} \leq m + \binom{k+1}{2} + 4 n.
$$
\end{theorem}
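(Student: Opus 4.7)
Call $\|L(G)\|_{(k)} - m - \binom{k+1}{2}$ the \emph{excess}; if it is nonpositive every claim is immediate, so assume throughout that $\BC_k(G)$ fails. Proposition \ref{prop:onlym1} bounds the excess by $\min\{m_1 - m_2 + \sqrt{2km_2},\ m_2 - m_1 + \sqrt{2(n-k)m_1}\}$. The cases $m_1 \leq m_2$ (use the first branch, drop the nonpositive $m_1 - m_2$) and $m_1 \geq m_2$ (use the second, drop $m_2 - m_1$) collapse this to $\max\{\sqrt{2km_2},\ \sqrt{2(n-k)m_1}\}$. Proposition \ref{prop:almostsplit} then supplies $m_1, m_2 \leq k\sqrt{2n}$, giving $\sqrt{2km_2} \leq 2^{3/4}\,k\,n^{1/4}$ and $\sqrt{2(n-k)m_1} \leq 2^{3/4}\,n^{1/4}\sqrt{k(n-k)}$; since $k \leq n$, each is at most $2^{3/4} n^{3/4}\sqrt{k}$.

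For the first inequality $\|L(G)\|_{(k)} - m - \binom{k+1}{2} \leq 2^{3/4} n^{1/4}\sqrt{k(n-k)}$: when $k \leq n/2$, the relation $k \leq \sqrt{k(n-k)}$ bounds both terms of the max by $2^{3/4} n^{1/4}\sqrt{k(n-k)}$; when $k > n/2$, equation (\ref{eq:complementation}) shows that the excess at $(G, k)$ equals the excess at $(\overline{G}, n-k-1)$, so applying the previous case to $\overline{G}$ at $n-k-1 < n/2$, together with $(n-k-1)(k+1) \leq k(n-k)$, yields the same bound. For the second inequality, the paragraph after Proposition \ref{prop:use-t} gives $k < \sqrt{8n}/(1-t)$ whenever the excess is positive, hence $\sqrt{k} < 2^{3/4} n^{1/4}/\sqrt{1-t}$; substituting into $2^{3/4} n^{3/4}\sqrt{k}$ produces the desired $2^{3/2} n/\sqrt{1-t}$. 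Bipartite graphs satisfy $t(G) \leq 1/2$ (every induced subgraph is bipartite, with spectral radius at most half its order), so $\sqrt{1-t} \geq 1/\sqrt{2}$ and the second inequality specializes to $4n$.

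The structural key to the second inequality is the square-root form in Proposition \ref{prop:onlym1}: after Proposition \ref{prop:almostsplit} bounds $m_1, m_2$ linearly in $k$, the $\sqrt{\cdot}$ produces a $\sqrt{k}$ dependence, and the bound $k = O(\sqrt{n}/(1-t))$ from Proposition \ref{prop:use-t} then combines to give $O(n/\sqrt{1-t})$ rather than the coarser $O(n/(1-t))$ one would obtain by plugging $k$ in directly. The one genuinely delicate step is the ``max'' reduction of Proposition \ref{prop:onlym1}, which relies on noticing that in each of the two cases the relevant linear correction is nonpositive and may safely be discarded.
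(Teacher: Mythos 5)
Your proposal is correct and follows essentially the same route as the paper: Proposition \ref{prop:onlym1} with the drop-the-nonpositive-term reduction, the bounds $m_1,m_2 \leq k\sqrt{2n}$ from Proposition \ref{prop:almostsplit}, and the constraint $k < \sqrt{8n}/(1-t)$ (with $t \leq 1/2$ for bipartite graphs) from Proposition \ref{prop:use-t}. Your explicit complementation argument for $k > n/2$ is just a more detailed rendering of the paper's $k' = \min\{k,n-k\}$ step, so there is no substantive difference.
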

\begin{proof}
Let $k' = \min\{k,n-k\}$.  Note that, by Proposition \ref{prop:almostsplit} and (\ref{eq:complementation}), $m_1,m_2 < k' \sqrt{2n}$.  Therefore,
\begin{align*}
\| L(G) \|_{(k)} &\leq m + \binom{k+1}{2} + \sqrt{\max\{2(n-k)k'\sqrt{2n},2kk'\sqrt{2n}\}} \\
&= m + \binom{k+1}{2} + 2^{3/4} n^{1/4} \sqrt{\max\{(n-k)k',kk'\}} \\
&\leq m + \binom{k+1}{2} + 2^{3/4} n^{1/4} \sqrt{k' (n-k')} \\
&= m + \binom{k+1}{2} + 2^{3/4} n^{1/4} \sqrt{k(n-k)} \\
&\leq m + \binom{k+1}{2} + 2^{-1/4} n^{5/4}.
\end{align*}
Then, using Proposition \ref{prop:use-t}, $k \leq \sqrt{8n}/(1-t)$ for any $G$ not satisfying $\BC_k(G)$, with $t=1/2$ for bipartite graphs.
\end{proof}

\section{Narrow window of violations} \label{sec:window}

By Proposition \ref{prop:almostsplit}, $m_1, m_2 \leq k \sqrt{2n}$ if $\BC_k(G)$ fails to hold.  Suppose that $G$ also violates the conjecture at $l > k$.  Then, since $S_l \setminus S_k \subseteq \overline{S_k}$,
\begin{align*}
m + \binom{l+1}{2} & < \| L \|_{(l)} \\
&\leq m + e(S_l) - e(\overline{S_l}) + l \sqrt{2n} \qquad \text{by (\ref{eq:m1m2energy})} \\
&\leq m + e(S_k) + e(S_k,S_l \setminus S_k) + e(S_l \setminus S_k) - e(\overline{S_l}) + l \sqrt{2n}  \\
&\leq m + \binom{l}{2} - \binom{l-k}{2} + e(S_l \setminus S_k) - 0 + l \sqrt{2n} \\
&\leq m + \binom{l}{2}-\binom{l-k}{2} + k \sqrt{2n} + l \sqrt{2n} \qquad \text{by Proposition \ref{prop:almostsplit}} \\
&= m + \binom{l}{2}-\binom{l-k}{2} + (k+l) \sqrt{2n} .
\end{align*}
Thus,
\begin{align*}
(k + l) \sqrt{2n} & > \binom{l+1}{2} - \binom{l}{2} + \binom{l-k}{2} \\
& = \frac{(l-k)^2+(l+k)}{2}
\end{align*}
so $\sqrt{2n} > 2[(l-k)^2+(l-k)]/(k+l) > (l-k)^2/l$, i.e., $l - k < (2n)^{1/4} l^{1/2}$.   Similarly, suppose that $G$ violates Brouwer at $l < k$.  Then, since $\overline{S_l} \setminus \overline{S_k} \subseteq S_k$,
\begin{align*}
m + \binom{l+1}{2} & < \| L \|_{(l)} \\
&\leq m + e(S_l) - e(\overline{S_l}) + l \sqrt{2n} \\
&\leq m + \binom{l}{2} - e(\overline{S_l} \setminus \overline{S_k}) + l \sqrt{2n} \\
&\leq m + \binom{l}{2} - \left (\binom{k-l}{2} - k \sqrt{2n} \right )  + l \sqrt{2n} \\
&= m + \binom{l}{2}-\binom{k-l}{2} + (k + l) \sqrt{2n},
\end{align*}
from which it follows that $k -l < (2n)^{1/4} k^{1/2}$.  Thus, $|l-k| < (2n)^{1/4} \max\{k,l\}^{1/2} < 2^{1/4} n^{1/4 + 1/2}$.

\begin{theorem} \label{prop:smallwindow}
If $\BC_k(G)$ and $\BC_l(G)$ fail to hold, then 
$$
|l-k| < (2n)^{1/4} \max \{k,l\}^{1/4} < 2^{1/4} n^{3/4}
$$
\end{theorem}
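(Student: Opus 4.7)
The plan is to assume both $\BC_k(G)$ and $\BC_l(G)$ fail and extract a quantitative constraint on $|l-k|$ by combining Proposition \ref{prop:almostsplit} at both indices. Without loss of generality, suppose $k < l$; the case $l < k$ will follow by a symmetric dual argument.

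First I would apply the inequality (\ref{eq:m1m2energy}) at level $l$, bounding $\|A\|_{(l)} \leq \sqrt{2lm} \leq l\sqrt{2n}$ -- the second step being legitimate because failure of $\BC_l(G)$ forces $l > m/n$ by Proposition \ref{prop:use-t}. The decisive move is then to decompose $e(S_l)$ along the chain $S_k \subset S_l$, writing $e(S_l) = e(S_k) + e(S_k, S_l \setminus S_k) + e(S_l \setminus S_k)$. Since $S_l \setminus S_k \subseteq \overline{S_k}$, applying Proposition \ref{prop:almostsplit} \emph{at level $k$} bounds the last summand by $e(\overline{S_k}) < k\sqrt{2n}$, while the first two are trivially at most $\binom{l}{2} - \binom{l-k}{2}$. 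Using $e(\overline{S_l}) \geq 0$ then gives $\|L\|_{(l)} < m + \binom{l}{2} - \binom{l-k}{2} + (k+l)\sqrt{2n}$; contrasting with the assumed lower bound $\|L\|_{(l)} > m + \binom{l+1}{2}$ collapses to an inequality of the form $\binom{l-k}{2} + l < (k+l)\sqrt{2n}$, from which $(l-k)^2 = O(l\sqrt{n})$ and hence $l - k < (2n)^{1/4} l^{1/2}$.

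For the symmetric case $l < k$, I would dualize: now $\overline{S_l} \setminus \overline{S_k} = S_k \setminus S_l \subseteq S_k$, and Proposition \ref{prop:almostsplit} at $k$ instead controls the \emph{non-edges} inside $S_k$, namely $|\binom{S_k}{2} \setminus E(G)| < k\sqrt{2n}$. This yields a matching lower bound on the edges inside $\overline{S_l}$ coming from the block $\overline{S_l} \setminus \overline{S_k}$, and plugging through (\ref{eq:m1m2energy}) by an analogous computation gives $k - l < (2n)^{1/4} k^{1/2}$. Together the two cases yield $|l-k| < (2n)^{1/4} \max\{k,l\}^{1/2} \leq 2^{1/4} n^{3/4}$.

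The argument is more bookkeeping than technique: the key insight is that failure of the conjecture at a single index $k$ already forces significant near-split structure on $G$ (Proposition \ref{prop:almostsplit}), and propagating that structural information through the chain $S_k \subset S_l$ (or $\overline{S_l} \subset \overline{S_k}$) suffices. The main obstacle I anticipate is tracking the asymmetric roles of $k$ and $l$ correctly -- specifically, ensuring it is always \emph{the smaller of the two indices} whose near-split bound is invoked -- and cleanly extracting the dominant $(l-k)^2/2$ term from the cancellation $\binom{l+1}{2} - \binom{l}{2} + \binom{l-k}{2}$ so that the $(2n)^{1/4}$ rate emerges sharply.
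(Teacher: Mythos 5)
Your proposal follows essentially the same route as the paper's proof: bound $\|L\|_{(l)}$ via (\ref{eq:m1m2energy}) with $\|A\|_{(l)} \leq \sqrt{2lm} \leq l\sqrt{2n}$ (using $l > m/n$), decompose $e(S_l)$ along the chain $S_k \subset S_l$ and invoke Proposition \ref{prop:almostsplit} to control $e(S_l \setminus S_k)$, then dualize for $l < k$ using the non-edge bound inside $S_k$, arriving at the same quadratic inequality $\binom{l-k}{2} + l < (k+l)\sqrt{2n}$. The only blemish, shared with the paper's own write-up, is the loose constant-tracking in the last step, where $(l-k)^2 < 2(k+l)\sqrt{2n}$ yields the stated $(2n)^{1/4}\max\{k,l\}^{1/2}$ bound only up to a constant factor.
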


\section{Signed Graphs} \label{sec:signed}

Let $G^\tau$ be a {\em signed graph}, i.e., a simple undirected graph $G$ and a function $\tau : E(G) \rightarrow \{-1,1\}$.  Let $A(G) \in \mathbb{R}^{V(G) \times V(G)}$ denote its {\em signed adjacency matrix}, where $A(G)_{v,w} = \tau(\{v,w\})$, let $D(G)$ denote the diagonal degree matrix of $G$, and let $L(G) = D(G) - A(G)$ denote the {\em signed Laplacian matrix} of $G$. Write $m = |E(G)|$ for the number of edges in $G$ and $n = |V(G)|$ for the number of vertices. Note that, for $\tau \equiv 1$ (i.e., ordinary graphs), $L(G^\tau)$ is the ordinary Laplacian of $G$, and, for $\tau \equiv -1$, $L(G^\tau)$ is $Q(G)$, the ``unsigned Laplacian matrix'' of $G$.  We denote the eigenvalues of $L(G^\tau)$, as before, by $\lambda_1 \geq \ldots \geq \lambda_n = 0$.  Somewhat surprisingly, although Brouwer's conjecture states that $s_k = \sum_{j=1}^k \lambda_j$ satisfies $s_k \leq |E(G)| + \binom{k+1}{2}$ for $\tau \equiv 1$, and the same is conjectured (see, for example, \cite{AsOmTa13,ChHaJiLi18}) for $\tau \equiv -1$ (and both are easy to show for $G = K_n$), the statement is false for almost all $\tau$ and $K_n$.

\begin{theorem}
Asymptotically almost surely, for $\tau$ chosen uniformly at random from $\{-1,1\}^{E(G)}$ with $G = K_n$, there exists a $k \in [n]$ so that
$$
s_k := \sum_{j=1}^k \lambda_k > m + \binom{k+1}{2}.
$$
\end{theorem}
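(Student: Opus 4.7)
The plan is to reduce the problem to a classical random matrix computation. For $G = K_n^\tau$, every vertex has degree $n-1$, so $D = (n-1)I$ and $L = (n-1)I - A$, where $A$ is a uniformly random symmetric $\pm 1$ matrix with zero diagonal---a Rademacher Wigner matrix. Writing $\mu_1 \geq \cdots \geq \mu_n$ for its eigenvalues, the eigenvalues of $L$ in decreasing order are $(n-1) - \mu_{n+1-i}(A)$, and combined with $\tr A = 0$ this yields the identity $s_k = k(n-1) + \sum_{i=1}^{n-k} \mu_i(A)$. Setting $m = \binom{n}{2}$ and $j := n - k$, a short algebraic simplification recasts the desired inequality $s_k > m + \binom{k+1}{2}$ as the equivalent condition
\[
\sum_{i=1}^{j} \mu_i(A) > n + \frac{j^2 - 3j}{2}.
\]

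Next, I would choose $j = \lfloor 2\sqrt{n} \rfloor$ and invoke the Wigner semicircle heuristic: for $j = o(n)$ the top $j$ eigenvalues concentrate near the spectral edge $2\sqrt{n}$, so $\sum_{i=1}^{j} \mu_i(A) \approx 2j\sqrt{n}$. At $j \sim 2\sqrt{n}$ this predicts LHS $\approx 4n$ versus RHS $= 3n - O(\sqrt{n})$, a violation of order $n$. Parametrizing $j = c\sqrt{n}$, the predicted excess is $(2c - 1 - c^2/2)n$, positive for $c \in (2-\sqrt{2}, 2+\sqrt{2})$ and maximized at $c = 2$ with value $n$, so this choice is essentially optimal.

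To upgrade the heuristic to an a.a.s.\ statement, I would combine two ingredients. For the expectation, edge rigidity for Rademacher Wigner matrices (Erd\H{o}s--Schlein--Yau and successors) gives $\mu_j(A) \geq (2 - o(1))\sqrt{n}$ a.a.s.\ for $j = \Theta(\sqrt{n})$, and hence $\sum_{i=1}^{j}\mu_i(A) \geq j\,\mu_j(A) \geq (4-o(1))n$. For concentration, $A \mapsto \sum_{i=1}^j \mu_i(A) = \sup_{P} \tr(PA)$ (supremum over rank-$j$ orthogonal projections) is convex and $\sqrt{j}$-Lipschitz in Frobenius norm by Cauchy--Schwarz; after translating to the $\binom{n}{2}$ sign coordinates (a single sign-flip changes $\|A\|_F$ by $2\sqrt{2}$), Talagrand's convex concentration inequality delivers fluctuations of order $O(\sqrt{j\log n}) = o(n)$. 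Together these give $s_k - m - \binom{k+1}{2} \geq n - o(n)$ a.a.s., the claimed $\Omega(n)$ violation.

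The main obstacle is the expectation estimate. The concentration step is routine, and bulk ($j = \Theta(n)$) semicircle estimates are standard; but with $j = \Theta(\sqrt{n})$ we sit at the spectral edge, where one genuinely needs quantitative information about the locations of the top $\sqrt{n}$ eigenvalues. A self-contained alternative is to control $\mathbb{E}[\sum_{i=1}^j \mu_i(A)]$ via the trace method using Catalan-asymptotic moment estimates $\mathbb{E}[\tr A^{2p}] \sim C_p n^{p+1}$ with $p$ growing slowly in $n$, and then combine with the Talagrand bound above; this avoids the edge-rigidity black box but requires careful bookkeeping to convert trace moments into a pointwise lower bound on a top-eigenvalue sum.
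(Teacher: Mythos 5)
Your proposal is correct, and while it leans on the same random-matrix input as the paper (local semicircle law / eigenvalue rigidity for Wigner matrices at the spectral edge, here for the $\pm 1$ Rademacher ensemble), the reduction is genuinely different and in fact cleaner. The paper works directly with the top $k = n-\sqrt{n}+1/2$ eigenvalues of $L$, approximates $\sum_{j\le k}\nu_j$ by integrating $x$ against the semicircle density via an integration-by-parts argument with the local law, and then does calculus with the quantile function $f^{-1}$ to extract a violation of about $n/2$. You instead use $L=(n-1)I-A$ and $\tr A=0$ to rewrite $s_k = k(n-1)+\sum_{i=1}^{j}\mu_i(A)$ with $j=n-k$, turning the conjecture's failure into the transparent condition $\sum_{i=1}^{j}\mu_i(A) > n+\tfrac{j^2-3j}{2}$ (your algebra here checks out), and then you only need the crude bound $\sum_{i\le j}\mu_i \ge j\,\mu_j$ together with rigidity for the single eigenvalue $\mu_j$ at $j=\Theta(\sqrt{n})$, which gives $\mu_j=(2-O(n^{-1/3+\epsilon}))\sqrt{n}$ a.a.s. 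This avoids the paper's integrated-local-law computation entirely, and your optimization $j\approx 2\sqrt{n}$ even improves the violation constant from $\approx n/2$ to $\approx n$. Two small remarks: the Talagrand concentration step is superfluous, since the rigidity bound you quote is already an a.a.s. statement, so $\sum_{i\le j}\mu_i \ge j\mu_j \ge (4-o(1))n$ a.a.s. finishes the proof by itself (concentration would only be needed if you replaced rigidity by a mere expectation bound, e.g., via the trace-method alternative you sketch); and the zero diagonal of $A$ is harmless for the rigidity results you cite, exactly as it is for the reference the paper invokes, so there is no hidden obstruction there.
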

\begin{proof}
Note that
$$
L(G^\tau) = n I_n + \sqrt{n} M
$$
where $I_n$ is the $n\times n$ identity matrix, $J$ is the all-ones $n\times n$ matrix, and $M$ is a random symmetric matrix with $0$ diagonal so that its entries $M_{ij}$ above the diagonal are iid random variables taking each of the values $1/\sqrt{n}$ and $-1/\sqrt{n}$ with probability $1/2$.  Then the eigenvalues $\lambda_k$ of $L(G^\tau)$ are in bijection with the eigenvalues $\nu_k$ of $M$, $\nu_n \geq \cdots \geq \nu_1$, via $\lambda_k = n + \sqrt{n} \nu_k$.  Letting $\rho(x) = \frac{1}{2\pi} \sqrt{(4-x^2)_+}$ and $\mu$ the eigenvalue density function $n^{-1} \sum_{i=1}^n \delta_{\nu_i}$, the small-scale semicircle law (see Theorem 2.8 of \cite{BeKn16}) implies that:

\begin{theorem} \label{thm:semicircle} For all $\epsilon, D > 0$,
$$
\Pr \left [ \left | \int_\theta^2 \mu(x) - \rho(x) \, dx \right | > n^{-1+\epsilon} \right ] \leq n^{-D}
$$
for all $n > n_0(D,\epsilon)$.
\end{theorem}

For $k \in [n]$, let $\theta_k$ be defined by
$$
n \int_{\theta_k}^2 \rho(x) \, dx = k - \frac{1}{2}.
$$
Then, the accompanying eigenvalue rigidity theorem (Theorem 2.9 in \cite{BeKn16}) implies that:

\begin{theorem} \label{thm:rigidity} For all $\epsilon, D > 0$, and any $k \in [n]$,
$$
\Pr \left [ \left | \lambda_i - \theta_i \right | > n^{-2/3+\epsilon} \max \{k,n+1-k\}^{-1/3} \right ] \leq n^{-D}
$$
for all $n > n_0(D,\epsilon)$.
\end{theorem}

Thus, fixing $\epsilon, D > 0$ and taking $n_0$ to be the maximum of the two functions in Theorem \ref{thm:semicircle} and \ref{thm:rigidity}, with probability $\geq 2n^{-D}$ we have
\begin{align*}
    \left | \int_{\nu_k}^2 x (\mu(x)-\rho(x)) \, dx \right |
    &= \left | \left [ x \int_{-2}^x \mu(t) - \rho(t) \, dt \right ]_{\nu_k}^2 \!\!\! - \int_{\nu_k}^2 \int_{-2}^x \mu(t) - \rho(t) \, dt \, dx \right | \\    
    &= \left | - \nu_k \int_{-2}^{\nu_k} \mu(t) - \rho(t) \, dt - \int_{\nu_k}^2 \int_{-2}^x \mu(t) - \rho(t) \, dt \, dx \right | \\
    &\leq \nu_k n^{-1+\epsilon} + (2-\nu_k)n^{-1+\epsilon} = 2n^{-1+\epsilon},
\end{align*}
so that, using that $|\rho(x)| \leq 1/\pi$ and writing $\pm C$ for some quantity bounded in absolute value by $C$,
\begin{align*}
\sum_{j=1}^k \nu_j &= n \int_{\nu_k}^2 x \mu(x) \, dx = n \int_{\nu_k}^2 x \rho(x) \, dx \pm 2n^{\epsilon} \\
&= n \int_{\theta_k}^2 x \rho(x) \, dx \pm 2n^{\epsilon} + n \int_{\theta_k}^{\nu_k} x \rho(x) \, dx\\
&= n \int_{\theta_k}^2 x \rho(x) \, dx \pm 2n^{\epsilon} \pm \frac{2}{\pi} n \left | \nu_k - \theta_k \right | \\
&= n \int_{\theta_k}^2 x \rho(x) \, dx \pm 2n^{\epsilon} \pm \frac{2}{\pi} n^{1/3+\epsilon} (n/2)^{-1/3} \\
&= \frac{n}{2\pi} \int_{\theta_k}^2 x \sqrt{4-x^2} \, dx \pm 2n^{\epsilon} \pm \frac{2}{\pi} n^{1/3+\epsilon} n^{-1/3} \\
&> \frac{n (4-f^{-1}(k)^2)^{3/2}}{6 \pi} - 3 n^{\epsilon},
\end{align*}
where $f(x) = \frac{1}{2} + \frac{n}{2\pi} \int_{x}^2 \sqrt{4-t^2} \, dt$.  Thus,
\begin{align}
s_k &= \sum_{j=1}^k \lambda_j = \sum_{j=1}^k \left ( n + \sqrt{n} \nu_j \right ) = kn + \sqrt{n} \sum_{j=1}^k \nu_j \nonumber \\
& > kn + \frac{n^{3/2} (4-f^{-1}(k)^2)^{3/2}}{6 \pi} - 3 n^{1/2+\epsilon} \label{eq:wigner}.
\end{align}
Let $k = n - n^{1/2}+1/2$.  Note that $f$ is a decreasing function on $[-2,2]$, so $f^{-1}$ is decreasing as well. We claim that
$$
\int_{2-q}^2 \sqrt{4-t^2} \, dt \geq \frac{2 \sqrt{2}}{3} q^{3/2}
$$
when $0 \leq q \leq 2$. Indeed, both sides are zero when $q = 0$, and the derivative of the left-hand side satisfies
$$
\frac{d}{dq} \int_{2-q}^2 \sqrt{4-t^2} \, dt = \sqrt{4-(2-q)^2} = \sqrt{4q-q^2} \geq (2 q)^{1/2} = \frac{d}{dq} \left ( \frac{2\sqrt{2}}{3} q^{3/2} \right ).
$$
So, $f^{-1}(n-\sqrt{n}+1/2) < -2+q$ with $q = (9\pi^2/(2n))^{1/3}$ since
\begin{align*}
    f(-2+q) &= \frac{1}{2} + \frac{n}{2\pi} \int_{-2+q}^2 \sqrt{4-t^2} \, dt \\
    &= \frac{1}{2} + n - \frac{n}{2\pi} \int_{2-q}^{2} \sqrt{4-t^2} \, dt \\
    &\leq \frac{1}{2} + n - \frac{n}{2\pi}\cdot \frac{2 \sqrt{2}}{3} q^{3/2}  \\
    &= n - \sqrt{n}+1/2.
\end{align*}
Then
\begin{align*}
    \frac{n^{3/2} (4-f^{-1}(k)^2)^{3/2}}{6 \pi}  &\geq \frac{n^{3/2} \left [ 4-\left (-2+\left (\frac{9\pi^2}{2n} \right )^{1/3} \right )^2 \right ]^{3/2}}{6 \pi} \\
    &= \frac{n^{3/2} \left [ 4 \left ( \frac{9\pi^2}{2n} \right )^{1/3} - \left ( \frac{9\pi^2}{2n} \right )^{2/3} \right ]^{3/2}}{6 \pi} \\
    &\geq \frac{n^{3/2} \left ( 2 \left ( \frac{9\pi^2}{2n} \right )^{1/3} \right )^{3/2}}{6 \pi} = \frac{3n}{2}.
\end{align*}
if $n \geq 6 > 9 \pi^2/16$.  Therefore, by (\ref{eq:wigner}), we have
\begin{align*}
    s_k &> \left ( n - n^{1/2}+ \frac{1}{2} \right ) n + \frac{n^{3/2} (4-f^{-1}(n - n^{1/2} + 1/2)^2)^{3/2}}{6 \pi} - 3 n^{1/2+\epsilon} \\
    &> n^2 - n^{3/2} + \frac{3n}{2} - 3 n^{1/2+\epsilon} \\
    &= \frac{n(n-1)}{2} + \frac{n(n+1)}{2} - n^{3/2} + \frac{3n}{2} - 3 n^{1/2+\epsilon} \\
    &= |E(G)| + \frac{(k+n^{1/2}-1/2)(k+n^{1/2}+1/2)}{2} - n^{3/2} + \frac{3n}{2} - 3 n^{1/2+\epsilon} \\
    &= |E(G)| + \frac{k(k+1) + 2n^{1/2}k - k + n - 1/4}{2} - n^{3/2} + \frac{3n}{2} - 3 n^{1/2+\epsilon} \\
    &> |E(G)| + \binom{k+1}{2} + \frac{2n^{1/2}k}{2} - n^{3/2} - 3 n^{1/2+\epsilon} - \frac{1}{8}  + \frac{3n}{2} \\
    &= |E(G)| + \binom{k+1}{2} + \frac{2n^{1/2}(n-n^{1/2})}{2} - n^{3/2} - 3 n^{1/2+\epsilon}- \frac{1}{8}  + \frac{3n}{2}\\
    &= |E(G)| + \binom{k+1}{2} - n - 3 n^{1/2+\epsilon}- \frac{1}{8} + \frac{3n}{2} \\
    &= |E(G)| + \binom{k+1}{2} + \frac{n}{2} (1+o(1)).
\end{align*}
\end{proof}

\section*{Acknowledgments}

An enormous thank you to V.~Nikiforov for numerous enlightening conversations and insightful questions, and to the Department of Mathematics at the University of Memphis for hosting the author during the period when this work was carried out.  Thanks as well to An Chang, Lei Zhang, and Yirong Zheng for helpful discussions and pointers to relevant literature.

\bibliography{refs}{}

\begin{thebibliography}{10}

\bibitem{AsOmTa13}
F.~Ashraf, G.~R. Omidi, and B.~Tayfeh-Rezaie.
\newblock On the sum of signless {L}aplacian eigenvalues of a graph.
\newblock {\em Linear Algebra Appl.}, 438(11):4539--4546, 2013.

\bibitem{Ba11}
Hua Bai.
\newblock The {G}rone-{M}erris conjecture.
\newblock {\em Trans. Amer. Math. Soc.}, 363(8):4463--4474, 2011.

\bibitem{BeKn16}
Florent {Benaych-Georges} and Antti {Knowles}.
\newblock {Lectures on the local semicircle law for Wigner matrices}.
\newblock {\em arXiv e-prints}, page arXiv:1601.04055, January 2016.

\bibitem{Bh97}
Rajendra Bhatia.
\newblock {\em Matrix analysis}, volume 169 of {\em Graduate Texts in
  Mathematics}.
\newblock Springer-Verlag, New York, 1997.

\bibitem{BrHa12}
Andries~E. Brouwer and Willem~H. Haemers.
\newblock {\em Spectra of graphs}.
\newblock Universitext. Springer, New York, 2012.

\bibitem{ChHaJiLi18}
Xiaodan Chen, Guoliang Hao, Dequan Jin, and Jingjian Li.
\newblock Note on a conjecture for the sum of signless {L}aplacian eigenvalues.
\newblock {\em Czechoslovak Math. J.}, 68(143)(3):601--610, 2018.

\bibitem{DuZh12}
Zhibin Du and Bo~Zhou.
\newblock Upper bounds for the sum of {L}aplacian eigenvalues of graphs.
\newblock {\em Linear Algebra Appl.}, 436(9):3672--3683, 2012.

\bibitem{HaMoTa10}
W.~H. Haemers, A.~Mohammadian, and B.~Tayfeh-Rezaie.
\newblock On the sum of {L}aplacian eigenvalues of graphs.
\newblock {\em Linear Algebra Appl.}, 432(9):2214--2221, 2010.

\bibitem{HaSi81}
Peter~L. Hammer and Bruno Simeone.
\newblock The splittance of a graph.
\newblock {\em Combinatorica}, 1(3):275--284, 1981.

\bibitem{Ma10}
On~Mayank.
\newblock Variants of the {G}rone--{M}erris conjecture.
\newblock Master's thesis, Eindhoven University of Technology, 2010.

\bibitem{Ni16}
V.~Nikiforov.
\newblock Beyond graph energy: Norms of graphs and matrices.
\newblock {\em Linear Algebra and its Applications}, 506:82 -- 138, 2016.

\bibitem{Ni19}
V.~Nikiforov.
\newblock Personal communication, 2019.

\bibitem{Ro19}
Israel {Rocha}.
\newblock {Brouwer's conjecture holds asymptotically almost surely}.
\newblock {\em arXiv e-prints}, page arXiv:1906.05368, Jun 2019.

\end{thebibliography}
\bibliographystyle{plain}
\end{document}